\documentclass[11pt,a4paper]{article}

\usepackage[T1]{fontenc}
\usepackage[utf8]{inputenc}
\usepackage{lmodern}
\usepackage[english]{babel}

\usepackage{amsmath}
\usepackage{amssymb}
\usepackage{amsthm}
\usepackage[a4paper,margin=2.1cm]{geometry}
\usepackage[hidelinks]{hyperref}

\theoremstyle{plain}
\newtheorem{theorem}{Theorem}[section]
\newtheorem{proposition}[theorem]{Proposition}
\newtheorem{lemma}[theorem]{Lemma}

\newtheorem{question}[theorem]{Question}

\theoremstyle{definition}
\newtheorem{definition}[theorem]{Definition}

\theoremstyle{remark}
\newtheorem{remark}[theorem]{Remark}

\title{A nonmicroscopic $G_\delta$ set with only microscopic compact subsets}
\author{
Martin Doležal\textsuperscript{a}
\thanks{Corresponding author.
Email: \href{mailto:dolezal@math.cas.cz}{dolezal@math.cas.cz}.}
\thanks{The research of Martin Doležal was supported by
the Institute of Mathematics, Czech Academy of Sciences
(RVO 67985840).}
\and
Václav Vlasák\textsuperscript{b}
}
\date{\today}

\begin{document}

\maketitle

\begin{center}
\small
\textsuperscript{a}Institute of Mathematics,
Czech Academy of Sciences\\
Žitná 25, 115 67 Prague 1, Czech Republic

\medskip

\textsuperscript{b}Faculty of Mathematics and Physics,
Charles University\\
Sokolovská 83, 186 75 Praha 8, Czech Republic\\
\href{mailto:vaclav.vlasak@matfyz.cuni.cz}
{vaclav.vlasak@matfyz.cuni.cz}
\end{center}

\begin{abstract}
We construct a nonmicroscopic $G_\delta$ subset of the real line
all of whose compact subsets are microscopic, answering a question
of Zindulka and Nowakowski. We first obtain such a set in the standard Cantor space by
coding branches of a finitely branching tree together with
suitable choices of marked nodes. The coding of the branches ensures that the resulting set is not microscopic, while the marked nodes allow us to construct covers showing that every compact subset is microscopic.
The example is then transferred from the Cantor space
to the real line by the binary-expansion map.
\end{abstract}

\noindent\textbf{Keywords:}
Microscopic sets; Compact inscribing; Cantor space;
$\sigma$-ideals.

\section{Introduction}

The notion of a microscopic set was introduced by Appell in 2001
\cite{Appell2001}. A set $E\subseteq\mathbb R$ is microscopic if,
for every $\varepsilon\in(0,1)$, it admits a cover by intervals
$(I_n)_{n\geq1}$ satisfying $|I_n|<\varepsilon^n$ for all $n\geq1$.
Microscopic subsets of $\mathbb R$ form a $\sigma$-ideal.
Their properties and generalizations have been studied extensively;
see \cite{Appell2004,AppellDAnielloVath2001,AppellDAnielloVath2005,
CzudekEtAl,FilipczakWagnerBojakowska2009,Horbaczewska2018,HorbaczewskaEtAl2013,
Kwela2016,ZindulkaNowakowski} and the recent preprint~\cite{ZindulkaNowakowskiVillanueva-Segovia}.

By the inner regularity of Lebesgue measure, every Borel subset
of $\mathbb R$ of positive measure contains a compact subset
of positive measure. This suggests a general question for
$\sigma$-ideals $\mathcal I$ on Polish spaces: does every Borel
set outside $\mathcal I$ contain a compact subset outside
$\mathcal I$? Such compact inscribing questions ask whether
smallness of all compact subsets forces smallness of the
whole Borel set.

Zindulka asked at the Winter School in Abstract Analysis in 2022 whether every nonmicroscopic $G_\delta$ subset of the standard Cantor space (or of the real line) contains a compact nonmicroscopic subset. The question also appears in his joint paper with Nowakowski \cite[Question 9.12(ii)]{ZindulkaNowakowski}, in the more general setting of compact metric spaces. We give a negative answer:
there is a nonmicroscopic $G_\delta$ subset of $\mathbb R$
all of whose compact subsets are microscopic.

Our construction uses branches of a finitely branching tree,
following ideas from \cite[Lemma~3.10]{CzudekEtAl} and \cite[Theorems~6.7 and~6.8]{ZindulkaNowakowski}. The main step is to encode branches together with suitable
choices of marked nodes. The branches ensure that the resulting set is nonmicroscopic, while the marked nodes provide small covers of its compact subsets. Finally, we transfer the construction from Cantor space
to the real line using the binary-expansion map.

\subsection{Notation and definitions}

We use
\[
\omega=\{0,1,2,\ldots\},
\qquad
\mathbb N=\{1,2,3,\ldots\}.
\]
For a nonempty subset $E$ of a metric space $(X,d)$, put
\[
\operatorname{diam}E
=
\sup\{d(x,y):x,y\in E\},
\]
and set $\operatorname{diam}\varnothing=0$.
For nonempty subsets $E,F$ of $X$, write
\[
\operatorname{dist}(E,F)
=\inf\{d(x,y):x\in E,\ y\in F\}.
\]

Besides microscopicity, we will use a covering property,
denoted by $\mathcal T_\eta$, with a fixed geometric rate $\eta$,
in which the cover may start at any prescribed index.

\begin{definition}
Let $(X,d)$ be a metric space.

A set $E\subseteq X$ is \emph{microscopic} if for every
$0<\varepsilon<1$ there are sets $E_i\subseteq X$,
$i\in\mathbb N$, such that
\[
E\subseteq\bigcup_{i\geq1}E_i,
\qquad
\operatorname{diam}E_i<\varepsilon^i
\quad(i\geq1).
\]

For $0<\eta<1$, let $\mathcal T_\eta(X)$ be the class of all
sets $E\subseteq X$ such that for every $k\in\mathbb N$ there
are sets $E_i\subseteq X$, $i\geq k$, satisfying
\[
E\subseteq\bigcup_{i\geq k}E_i,
\qquad
\operatorname{diam}E_i<\eta^i
\quad(i\geq k).
\]
We write simply $\mathcal T_\eta$ when the space and its
metric are clear.
\end{definition}

In the notation of \cite[Definition~6.1]{ZindulkaNowakowski},
$\mathcal T_\eta=\mathcal D(\widehat s)$ for
$s=(\eta^n)_{n\geq1}$.

We restate the question in its general form.

\begin{question}[{\cite[Question~9.12(ii)]{ZindulkaNowakowski}}]
\label{Q:compact-inscribing}
Let $E$ be a nonmicroscopic $G_\delta$ subset of a compact
metric space. Does $E$ contain a compact nonmicroscopic subset?
\end{question}

Let $2^\omega$ be the Cantor space equipped with the metric
\begin{equation*}
d(x,y)
=
2^{-\min\{q\in\omega:x(q)\neq y(q)\}}
\qquad(x\neq y),
\end{equation*}
and $d(x,x)=0$.

Let $2^{<\omega}$ denote the set of finite binary sequences.
For $u\in2^{<\omega}$, let $|u|$ denote its length and put
\[
[u]=\{x\in2^\omega:u\subseteq x\},
\]
where $u\subseteq x$ means that $u$ is an initial segment of $x$.
Then $[u]$ is clopen and
\[
\operatorname{diam}[u]=2^{-|u|}.
\]
Sets of the form $[u]$, $u\in2^{<\omega}$, are called \emph{cylinders}.

For $x\in2^\omega$ and $n\in\omega$, let $x\upharpoonright n\in2^{<\omega}$ be the restriction of $x$ to the first $n$ coordinates.

We answer Question~\ref{Q:compact-inscribing} negatively by proving the following.

\begin{theorem}
\label{thm:main}
\begin{enumerate}
\item\label{item:main-cantor}
There is a nonmicroscopic $G_\delta$ set $A\subseteq2^\omega$
all of whose compact subsets are microscopic.

\item\label{item:main-real}
There is a nonmicroscopic $G_\delta$ subset of $\mathbb R$
all of whose compact
subsets are microscopic.
\end{enumerate}
\end{theorem}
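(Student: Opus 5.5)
We prove part~\ref{item:main-cantor} first and then transfer the example to $\mathbb R$. The intended mechanism is this: a compact set can only follow finitely many choices at each level, whereas a $G_\delta$ set can encode infinitely many (but finitely branching at each stage) choices of marked nodes.

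\textbf{Construction.} Fix a rapidly growing sequence of block lengths $\ell_1<\ell_2<\cdots$ and a finitely branching tree $T$ of block codes. At level $n$ we allow $N_n$ branches, with $N_n$ chosen so that the set $[T]$ of coded branches is as large as possible subject to some control. We set $A=\{x\in2^\omega:x$ codes a branch $b$ of $T$ together with a sequence of marks $m_n$ such that for infinitely many $n$ the mark $m_n$ points to a node of $b\}$. More concretely, after the $n$-th branch block we append a "marker block" in which $x$ names a level $j_n>n$ and a node $t_n$ of $T$ at level $j_n$. The condition "$t_n\subseteq b$ for infinitely many $n$" is $G_\delta$, because each instance "$t_n\subseteq b$" is decided by finitely many coordinates and is therefore clopen. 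The marker blocks must be allowed to be arbitrarily long, so that unboundedly many candidate nodes can be named. This is exactly what makes $A$ non-closed.

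\textbf{$A$ is nonmicroscopic.} We fix $\varepsilon$ small and show that no cover $(E_i)$ with $\operatorname{diam}E_i<\varepsilon^i$ covers $A$. The plan is a fusion/game argument. Given the cover, we build $x\in A$ stage by stage. At stage $n$ the sets $E_i$ with $i$ in a window determined by the current length have diameter at least the size of the next branch block, so only these finitely many sets can constrain that block. Since $N_n$ exceeds their number, we pick a branch node avoiding each of them. The marker can always be chosen to hit the branch, keeping $x\in A$. The sets $E_i$ with larger $i$ are dodged at later stages. The growth condition on $N_n$ versus $\ell_n$ must make this counting work for every $\varepsilon$ simultaneously, which is the point of taking $N_n$ superexponential in the preceding length.

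\textbf{Compact subsets are microscopic (main obstacle).} Let $K\subseteq A$ be compact. By compactness, the marker blocks of points of $K$ have bounded length, so at each stage $K$ uses only finitely many marker codes. We then cover $K$ via its marks. Each $x\in K$ has infinitely many $n$ with $t_n(x)\subseteq b(x)$, so $x$ lies in the cylinder determined by the marker code together with the node $t_n(x)$ at the much deeper level $j_n$. Given $\varepsilon$, we enumerate these cylinders across $n\ge k$ for $k$ large and assign them indices. We need the number of cylinders at stage $n$ (bounded in terms of $K$) times the required index growth to be beaten by the depth $j_n$. The depth $j_n$ may be chosen freely by the point but is bounded on $K$. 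We should therefore arrange that the code forces $j_n$ large relative to the code length, for instance $j_n\ge 2^{(\text{length of marker block})}$. Then the cylinders have diameter $2^{-j_n}$, which is tiny compared with $\varepsilon^{i}$ for the index $i$ they receive. Covering "infinitely often" only needs one hit per point, so it suffices to use indices from a tail, giving a $\mathcal T_\eta$-type cover and hence microscopicity. The delicate balance, which I expect to be the hardest step, is this: marks must be deep and cheap enough for compact sets, yet not so cheap that the full $A$ becomes microscopic. The resolution is that the full $A$ needs unboundedly many marker codes per stage, whose count outgrows any $\varepsilon^{-i}$ budget.

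\textbf{Transfer to $\mathbb R$.} Let $\phi(x)=\sum_q x(q)2^{-q-1}$. We first restrict to a set avoiding eventually constant sequences, for example by interleaving a $0\,1$ pattern, so that $\phi$ is a homeomorphism onto its image and sends cylinders to intervals of comparable length. This keeps the image $G_\delta$ in $\mathbb R$, since a homeomorphic copy of a Polish space is $G_\delta$. It also makes $\phi$ Lipschitz, which preserves microscopicity of compact subsets. In the other direction, $\phi^{-1}$ is Lipschitz up to a constant factor on the image. Indeed, a set of small diameter meets at most two cylinders of the corresponding length, and splitting each $E_i$ into two pieces costs only an index reindexing absorbed by changing $\varepsilon$. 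This preserves nonmicroscopicity.
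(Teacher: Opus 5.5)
\textbf{There is a genuine gap in part (i).} Your transfer to $\mathbb R$ is fine in substance: inserting a $01$ pattern makes the binary-expansion map a bi-Lipschitz homeomorphism onto a compact set, which is essentially the paper's closing remark.

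The problem is that your marks carry no rigidity. With free marks and the requirement ``$t_n\subseteq b$ for infinitely many $n$'', take any fixed choice of levels $j_n$ and put $\Phi(b)=\mathrm{code}\bigl(b,(b\upharpoonright j_n)_n\bigr)$. This map is continuous on the compact set $[T]$. So $\Phi([T])$ is a compact subset of $A$ in which every mark is correct, and reading off the branch maps it bijectively onto $[T]$.

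Your nonmicroscopicity argument never uses the freedom in the marks. It dodges the $E_i$ only through the branching numbers $N_n$ and then chooses the marker ``to hit the branch''. So, to the extent it works, it runs verbatim inside $\Phi([T])$ and shows that this compact subset is nonmicroscopic. If instead $\Phi([T])$ is microscopic, the fusion must fail. Either way the two halves cannot both hold for the set as designed.

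Correspondingly, the counting in your compact-subset step is unsupported. A cylinder attached to a hit at stage $n$ must fix every coordinate up to level $j_n$. That includes the node $t_n(x)$ and all the other, possibly wrong, marker blocks before it, so the cylinder is not determined by the hit. For $\Phi([T])$ you would need a cylinder for every node of $T$ at level $j_n$, which is exactly the quantity your fusion argument needs to outrun the covering budget.

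Two smaller problems:
\begin{enumerate}
\item The normalization $j_n\ge 2^{\text{(length of marker block)}}$ is impossible if a marker may name any node at level $j_n$ of a tree branching at every level, since there are at least $2^{j_n}$ such nodes.
\item In the fusion, making a mark at level $j_n>n$ correct commits branch levels $n+1,\dots,j_n$ before the corresponding $E_i$ have been dodged.
\end{enumerate}

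\textbf{What is missing is a mechanism that forbids continuous selections of marks}, and this is the heart of the paper. Its markings are canonical. The minimality condition (each mark is the first later node in the required residue class mod $2^n$), together with coherence ($a_{t_{n+1}}\equiv a_{t_n}\pmod{2^n}$), makes the marking up to a marked node $j$ depend on $j$ alone (Lemma~\ref{lem:unique-finite-marking}). Hence each node $j$ gives exactly one cylinder $J_j$, of diameter $2^{-2j-2}$, containing every point of $A$ that marks $j$, and $A=\bigcap_N\bigcup_{j\ge N}J_j$.

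The residue mod $2^m$ of the $m$th marked node is continuous on $A$. It splits $A$ into $2^m$ relatively clopen pieces $A_{m,r}$, each covered by the sparse family $J_{r+2^m i}$, $i\ge k$, and hence lying in $\mathcal T_{\eta_m}$ with $\eta_m=2^{-2^{m+1}}$. A compact $C\subseteq A$ then splits into finitely many compact pieces. The finite-union lemma, which needs compactness to pass to finite cylinder covers with consecutive index blocks, gives $C\in\mathcal T_{\eta_m}$ for every $m$.

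Nonmicroscopicity of $A$ comes separately, from the Hölder projection of $A$ onto the compact set $K$, in which node $j$ has $2^{j+1}$ children. The two facts are compatible precisely because no compact subset of $A$ projects onto $K$. In other words, admissible markings cannot be chosen continuously along all branches, while your free marks can.
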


We establish two elementary covering facts before turning
to the construction in Section~\ref{sec:construction}.
We first build the example in Cantor space
(Subsections~\ref{subsec:compact-obstruction}--\ref{subsec:compact-subsets})
and then transfer it to the real line
(Subsection~\ref{subsec:real-line})
using \cite[Lemma~2.8]{ZindulkaNowakowski}.

\subsection{Elementary covering facts}

\begin{lemma}
\label{lem:microscopic-T}
Let $E$ be a subset of a metric space $X$.
Then $E$ is microscopic if and only if
\[
E\in\mathcal T_\eta(X)
\qquad\text{for every }0<\eta<1.
\]
Moreover,
\[
0<\eta\leq\theta<1
\quad\Longrightarrow\quad
\mathcal T_\eta\subseteq\mathcal T_\theta.
\]
\end{lemma}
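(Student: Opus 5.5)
The monotonicity claim is the easiest part, so I will prove it first. If $\eta\le\theta$ and $E\in\mathcal T_\eta$, then for each $k$ the sets $E_i$, $i\ge k$, witnessing $E\in\mathcal T_\eta$ satisfy $\operatorname{diam}E_i<\eta^i\le\theta^i$. They therefore also witness the condition for $\mathcal T_\theta$ at that same $k$.

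For the implication ``$E\in\mathcal T_\eta$ for all $\eta$ $\Rightarrow$ $E$ microscopic'', I fix $\varepsilon\in(0,1)$ and apply the definition of $\mathcal T_\varepsilon$ with $k=1$. This gives sets $E_i$, $i\ge1$, with $\operatorname{diam}E_i<\varepsilon^i$ covering $E$, which is exactly microscopicity at $\varepsilon$.

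The converse carries the only real content: starting the cover at an arbitrary index $k$. Assume $E$ is microscopic, and fix $\eta\in(0,1)$ and $k\in\mathbb N$. I will trade the index shift for a smaller rate. I choose $\varepsilon\in(0,1)$ with $\varepsilon\le\eta^{k}$; for instance $\varepsilon=\eta^k$ works. Microscopicity gives sets $F_j$, $j\ge1$, covering $E$ with $\operatorname{diam}F_j<\varepsilon^j\le\eta^{kj}$. For every $j\ge1$ we have $kj\ge k+j-1$, because $(k-1)(j-1)\ge0$. Hence, putting $E_{k+j-1}=F_j$, we get
\[
\operatorname{diam}E_{k+j-1}<\eta^{kj}\le\eta^{k+j-1}.
\]
The map $j\mapsto k+j-1$ is a bijection from $\mathbb N$ onto $\{i:i\ge k\}$, so the family $(E_i)_{i\ge k}$ is well defined, covers $E$, and satisfies $\operatorname{diam}E_i<\eta^i$. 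This proves $E\in\mathcal T_\eta$.

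The only step needing care is this reindexing, namely choosing $\varepsilon$ so that the shifted exponents are dominated; the inequality $kj\ge k+j-1$ handles it.
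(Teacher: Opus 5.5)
Your proof is correct and follows the paper's argument essentially verbatim: you apply microscopicity with $\varepsilon=\eta^k$, reindex via $E_{k+j-1}=F_j$ using $kj\ge k+j-1$, take $k=1$ for the converse, and use $\eta^i\le\theta^i$ for monotonicity. Your justification of the index inequality via $(k-1)(j-1)\ge 0$ is a small detail the paper leaves implicit.
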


\begin{proof}
Suppose first that $E$ is microscopic. Fix $0<\eta<1$ and
$k\in\mathbb N$. Choose sets $F_j$, $j\geq1$, such that
\[
E\subseteq\bigcup_{j\geq1}F_j,
\qquad
\operatorname{diam}F_j<(\eta^k)^j.
\]
For $j\in\mathbb N$, put
\[
E_{j+k-1}=F_j.
\]
Since
\[
kj\geq j+k-1,
\]
we have
\[
\operatorname{diam}E_{j+k-1}
<
\eta^{kj}
\leq
\eta^{j+k-1}.
\]
Hence $E\in\mathcal T_\eta$.

Conversely, if $E\in\mathcal T_\eta$ for every $0<\eta<1$,
taking $k=1$ in each of these covering properties shows
that $E$ is microscopic.

The inclusion $\mathcal T_\eta\subseteq\mathcal T_\theta$
follows from $\eta^i\leq\theta^i$ whenever
$0<\eta\leq\theta<1$.
\end{proof}

\begin{lemma}
\label{lem:finite-union}
For fixed $0<\eta<1$, every finite union of compact members of
$\mathcal T_\eta(2^\omega)$ belongs to
$\mathcal T_\eta(2^\omega)$.
\end{lemma}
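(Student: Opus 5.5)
By induction it suffices to handle two compact sets $K,L\in\mathcal T_\eta(2^\omega)$ and show $K\cup L\in\mathcal T_\eta(2^\omega)$. Fix $k\in\mathbb N$. The plan is to interleave covers of $K$ and $L$ while keeping their indices disjoint.

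First I would reduce to covers by finitely many cylinders with strictly increasing indices. In $2^\omega$ any set of diameter $<\eta^i$ lies in a cylinder $[u]$ with $2^{-|u|}<\eta^i$; indeed the cylinder determined by the longest common initial segment of the set has the same diameter. So each $E_i$ may be replaced by an open cylinder. By compactness of $K$, finitely many of these already cover $K$. Hence for every $k$ there is a finite set $F\subseteq\{k,k+1,\dots\}$ and sets $E_i$ ($i\in F$) with $\operatorname{diam}E_i<\eta^i$ covering $K$. The same holds for $L$. Any unused indices $i\ge k$ receive $E_i=\varnothing$.

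Next, build the cover of $K\cup L$ in two stages. Apply the finite version to $K$ with starting index $k$, obtaining a finite index set $F_K\subseteq[k,m]$ for some $m$. Then apply the finite version to $L$ with starting index $m+1$, obtaining $F_L\subseteq[m+1,\infty)$. The sets $F_K$ and $F_L$ are disjoint. Define $E_i$ to be the $K$-set for $i\in F_K$, the $L$-set for $i\in F_L$, and $\varnothing$ otherwise. Every $i\ge k$ then satisfies $\operatorname{diam}E_i<\eta^i$, and the sets cover $K\cup L$. This shows $K\cup L\in\mathcal T_\eta$. Induction on the number of sets then gives the lemma; the union of two compact sets is compact, so the induction hypothesis applies.

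The only point needing care is the compactness step. We need the covering sets to be open, or at least to have open enlargements with the same diameter bound, so that a finite subcover exists. In $2^\omega$ this is immediate from the cylinder observation, because cylinders are clopen and the diameters take the discrete values $2^{-n}$. That is why the lemma is stated for $2^\omega$ and for compact sets.
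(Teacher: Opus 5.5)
Your proof is correct and follows the paper's argument: you replace each covering set by a cylinder, use compactness to get a finite cover, and place the covers of the pieces on consecutive disjoint blocks of indices. The paper treats all $s$ pieces in one sequential pass instead of inducting on two, and one small fix is needed: for a singleton $E_i$ the longest common initial segment is infinite, so there you should simply take a long enough initial segment, as the paper does with $n=\lfloor i\log_2(1/\eta)\rfloor+1$.
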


\begin{proof}
We first observe that every compact member $K$ of $\mathcal T_\eta(2^\omega)$ satisfies the following:
for every $k\in\mathbb N$ there
are $l\ge k$ and
cylinders $E_i\subseteq 2^\omega$, $k\le i\le l$, satisfying
\[
K\subseteq\bigcup_{k\le i\le l}E_i,
\qquad
\operatorname{diam}E_i<\eta^i
\quad(k\le i\le l).
\]
Indeed, suppose that $E\subseteq 2^\omega$ satisfies
\[
\operatorname{diam}E<\eta^i
\]
and $E\neq\varnothing$. Choose $x\in E$ and let
\[
n=
\left\lfloor
i\log_2\frac1\eta
\right\rfloor+1.
\]
Since $2^{-n}<\eta^i\leq2^{-(n-1)}$, any two points
of $E$ agree on their first $n$ coordinates.
Then
\[
E\subseteq[x\upharpoonright n]
\]
and
\[
\operatorname{diam}[x\upharpoonright n]
=
2^{-n}
<
\eta^i.
\]
Thus the sets in the covers defining $\mathcal T_\eta(2^\omega)$
can always be chosen as cylinders. Since cylinders are open,
compactness yields a finite subcover. Taking $l$ to be the
largest index in this subcover gives the required
finite cover.

Now let $s\in\mathbb N$ and
\[
C=C_1\cup\cdots\cup C_s,
\]
where each $C_r$, $1\le r\le s$, is compact and belongs to $\mathcal T_\eta(2^\omega)$.
Fix $k\in\mathbb N$.
Find $k_1\ge k$ and
cylinders $E_i\subseteq 2^\omega$, $k\le i\le k_1$, satisfying
\[
C_1\subseteq\bigcup_{k\le i\le k_1}E_i,
\qquad
\operatorname{diam}E_i<\eta^i
\quad(k\le i\le k_1).
\]
Now suppose that, for some $1\le r<s$, we have already constructed $k_r\in\mathbb N$ and cylinders $E_i$, $k\le i\le k_r$.
Find $k_{r+1}\ge k_r+1$ and
cylinders $E_i\subseteq 2^\omega$, $k_r+1\le i\le k_{r+1}$, satisfying
\[
C_{r+1}\subseteq\bigcup_{k_r+1\le i\le k_{r+1}}E_i,
\qquad
\operatorname{diam}E_i<\eta^i
\quad(k_r+1\le i\le k_{r+1}).
\]

By the above construction, we obtain $k_s\ge k$ and cylinders $E_i$, $k\le i\le k_s$ such that
\[
C\subseteq\bigcup_{k\le i\le k_s}E_i,
\qquad
\operatorname{diam}E_i<\eta^i
\quad(k\le i\le k_s),
\]
verifying that $C\in\mathcal T_\eta(2^\omega)$.
\end{proof}

\begin{remark}
The same argument shows that every countable union of compact
members of $\mathcal T_\eta(2^\omega)$ belongs to
$\mathcal T_\eta(2^\omega)$.
\end{remark}

\section{The construction}
\label{sec:construction}

\subsection{A compact set with a covering obstruction}
\label{subsec:compact-obstruction}

We first construct a compact set $K\subseteq2^\omega$
that cannot be covered by sets $E_i$ satisfying
$\operatorname{diam}E_i<2^{-4i}$ for every $i\geq1$.
In particular, $K$ will be nonmicroscopic.

For $j\in\omega$, put
\[
N_j=2^{j+1},
\qquad
T_j=[2^{j+1},2^{j+2})\cap\omega.
\]
Define a tree $\mathsf T\subseteq\omega^{<\omega}$ by
\[
\mathsf T
=
\{\varnothing\}
\cup
\bigl\{
(a_0,\ldots,a_n):
n\in\omega,\ a_0\in\{0,1\},\
a_{r+1}\in T_{a_r}\text{ for every }r<n
\bigr\}.
\]
Its root is
the empty sequence $\varnothing$, whose immediate successors
are $(0)$ and $(1)$.

We label each nonempty node $(a_0,\ldots,a_n)$ of $\mathsf T$ by its last
entry $a_n$. These labels give a bijection between the
nonroot nodes and $\omega$. Indeed, the sets $T_j$,
$j\in\omega$, partition $\{2,3,\ldots\}$, so every $p\geq2$
has a unique predecessor label $j$, determined by $p\in T_j$.
Since $j<p$, repeatedly taking predecessor labels reaches
$0$ or $1$ after finitely many steps, uniquely determining
the node labelled $p$.

We henceforth identify each nonroot node with its label.
The immediate successors of a node $j\in\omega$ therefore
have labels in $T_j$, and their number is $N_j$.
In particular, $\mathsf T$ is finitely branching.

Let $S_n$ consist of the labels of nodes at depth $n+1$.
Thus
\[
S_0=\{0,1\},
\qquad
S_{n+1}=\bigcup_{j\in S_n}T_j
\qquad(n\in\omega).
\]
Every $S_n$ is finite and nonempty.

An \emph{infinite branch} of $\mathsf T$ is a sequence
$(a_n)_{n\in\omega}\in\omega^\omega$ all of whose finite
initial segments belong to $\mathsf T$. Equivalently,
\begin{equation}
\label{eq:branchDeterminesPoint}
a_n\in S_n,
\qquad
a_{n+1}\in T_{a_n}
\qquad(n\in\omega).
\end{equation}
The entries $a_n$ are the labels of the successive nonroot
nodes along the branch. Since every successor label is
larger than its predecessor label, $(a_n)_{n\in\omega}$ is strictly
increasing.

For $j\in\omega$, put
\[
L_j=j+1,
\qquad
D_j=2j+2,
\]
and define
\[
f(0)=f(1)=1,
\qquad
f(p)=N_j
\quad\text{whenever }p\in T_j.
\]
After proving the next lemma, we will assign to each node $j\in\omega$ a cylinder determined
by $L_j$ coordinates, using extensions of length $D_j$ to
separate the cylinders assigned to its immediate successors.
The function $f$ will keep track of how many covering sets
have been avoided in the proof of
Proposition~\ref{prop:K-obstruction}.

\begin{lemma}
For every $j\geq2$,
\begin{equation}
\label{eq:f-estimate}
f(j)\leq j<2f(j).
\end{equation}
For every $j\in\omega$,
\begin{equation}
\label{eq:number-extensions}
2^{D_j-L_j}=N_j,
\end{equation}
and, for every $p\in T_j$,
\begin{equation}
\label{eq:child-length}
L_p\geq D_j.
\end{equation}
Finally, for every $j\in\omega$,
\begin{equation}
\label{eq:D-f-estimate}
D_j<4(f(j)+1).
\end{equation}
\end{lemma}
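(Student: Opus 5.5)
The plan is to verify each of the four inequalities directly from the definitions $N_j=2^{j+1}$, $T_j=[2^{j+1},2^{j+2})\cap\omega$, $L_j=j+1$, $D_j=2j+2$, together with the definition of $f$. All four are elementary computations, and the only real content lies in unwinding $f$.

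For \eqref{eq:f-estimate}, fix $j\ge2$. Since the sets $T_i$ partition $\{2,3,\ldots\}$, there is a unique $i\in\omega$ with $j\in T_i$, that is, $2^{i+1}\le j<2^{i+2}$. By definition $f(j)=N_i=2^{i+1}$. The left inequality $f(j)\le j$ is then immediate, and the right one follows from $j<2^{i+2}=2f(j)$.

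For \eqref{eq:number-extensions} we simply compute $D_j-L_j=(2j+2)-(j+1)=j+1$, so $2^{D_j-L_j}=2^{j+1}=N_j$. For \eqref{eq:child-length}, take $p\in T_j$. Then $p\ge2^{j+1}$, hence $L_p=p+1\ge2^{j+1}+1$. It remains to check $2^{j+1}+1\ge 2j+2$, i.e.\ $2^{j+1}\ge2j+1$. This holds for $j=0$ (where $2\ge1$), and it propagates inductively because the left side doubles while the right side grows by $2$; alternatively one can use $2^{m}\ge m+1$ with $m=j+1$, which gives $2^{j+1}\ge j+2$, and then handle the small cases separately if that is not strong enough. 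I would present the doubling induction as the cleanest route.

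For \eqref{eq:D-f-estimate}, I would split into cases. If $j\in\{0,1\}$, then $f(j)=1$, so the claim reads $D_j\le 4<8$, which holds since $D_0=2$ and $D_1=4$. If $j\ge2$, we use \eqref{eq:f-estimate}: from $j<2f(j)$ we get $D_j=2j+2<4f(j)+2<4(f(j)+1)$. The only mild obstacle in the whole lemma is remembering that $f$ is defined separately at $0$ and $1$, which is exactly why this case split is needed. Otherwise every step is a one-line estimate.
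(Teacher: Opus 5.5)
Your proposal is correct and follows the paper's proof essentially line by line. You use the same identification $f(j)=N_i=2^{i+1}$ via the unique $i$ with $j\in T_i$, the same computation $D_j-L_j=j+1$, the same bound $L_p\ge 2^{j+1}+1\ge 2j+2$, and the same split into $j\in\{0,1\}$ versus $j\ge 2$ for the last inequality. The paper leaves $2^{j+1}\ge 2j+1$ implicit, and your doubling induction does supply it, provided you use $2^{j+1}\ge 2$ in the step: $2^{j+2}=2^{j+1}+2^{j+1}\ge(2j+1)+2$.
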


\begin{proof}
Let $j\geq2$, and let $q$ be its predecessor label.
Then
\[
f(j)=N_q=2^{q+1},
\qquad
2^{q+1}\leq j<2^{q+2},
\]
which proves \eqref{eq:f-estimate}.

Since $D_j-L_j=j+1$, we obtain
\eqref{eq:number-extensions}.

If $p\in T_j$, then
\[
L_p=p+1\geq2^{j+1}+1\geq2j+2=D_j,
\]
which proves \eqref{eq:child-length}.

Finally, \eqref{eq:D-f-estimate} is immediate for $j=0,1$.
For $j\geq2$, it follows from \eqref{eq:f-estimate}.
\end{proof}

We construct binary words $u_j\in2^{L_j}$ recursively.
Start with
\[
u_0=(0)\in2^{L_0},
\qquad
u_1=(1,0)\in2^{L_1}.
\]
Suppose that $u_j$ has already been defined.
By \eqref{eq:number-extensions}, it has exactly $N_j$
extensions of length $D_j$. Assign distinct such extensions
to the nodes $p\in T_j$ and, using \eqref{eq:child-length},
extend each of them further to a word
\[
u_p\in2^{L_p}.
\]
Since the predecessor label of every $p\geq2$ is smaller
than $p$, this recursion defines $u_p$ for every $p\in\omega$.

For every $j\in\omega$, put
\[
I_j=[u_j].
\]
Then
\[
I_p\subseteq I_j
\qquad(p\in T_j).
\]
Moreover, cylinders corresponding to distinct immediate
successors of the same node are well separated:
\begin{equation}
\label{eq:sibling-separation}
p,q\in T_j,\ p\neq q
\quad\Longrightarrow\quad
\operatorname{dist}(I_p,I_q)>2^{-D_j}.
\end{equation}
Indeed, $u_p$ and $u_q$ differ at some coordinate smaller
than $D_j$.

Since $I_0$ and $I_1$ are disjoint, the nesting and
separation properties imply inductively that the cylinders
$I_j$, $j\in S_n$, are pairwise disjoint for every $n\in\omega$.
Define
\begin{equation}
\label{eq:def-K}
K=
\bigcap_{n\in\omega}
\bigcup_{j\in S_n}I_j.
\end{equation}
Since each $S_n$ is finite and nonempty, the unions in
\eqref{eq:def-K} are nonempty compact sets. They form a
decreasing sequence, so $K$ is nonempty and compact.

Every infinite branch $(a_n)_{n\in\omega}$ determines a
unique point of $K$. Indeed, the cylinders $I_{a_n}$ are
nested and
\[
\operatorname{diam}I_{a_n}
=2^{-a_n-1}\longrightarrow0,
\]
so they have exactly one common point, which belongs to $K$.

Conversely, the pairwise disjointness of the cylinders
at each level implies that every $x\in K$ determines, for a given $n\in\omega$,
a unique label $a_n(x)\in S_n$ such that
\begin{equation}
\label{eq:branch}
x\in I_{a_n(x)}.
\end{equation}
The nesting of the cylinders gives
\[
a_{n+1}(x)\in T_{a_n(x)}
\quad (n\in\omega),
\]
so $(a_n(x))_{n\in\omega}$ is an infinite branch.
Consequently, the points of $K$ correspond bijectively
to the infinite branches of $\mathsf T$.

\begin{proposition}
\label{prop:K-obstruction}
There is no sequence $(E_i)_{i\geq1}$ of subsets of $2^\omega$ such that
\[
K\subseteq\bigcup_{i\geq1}E_i
\]
and
\[
\operatorname{diam}E_i<2^{-4i}
\qquad(i\geq1).
\]
\end{proposition}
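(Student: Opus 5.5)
We argue by contradiction: suppose $(E_i)_{i\geq1}$ covers $K$ with $\operatorname{diam}E_i<2^{-4i}$. We will build an infinite branch $(a_n)_{n\in\omega}$ of $\mathsf T$ whose corresponding point of $K$ avoids every $E_i$. This is a contradiction, since every branch determines a point of $K$.

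The branch is chosen greedily, node by node. The bookkeeping invariant is that, once $a_n$ has been chosen, the cylinder $I_{a_n}$ is disjoint from $E_i$ for every $i\le f(a_n)$. (We may clearly assume all $E_i$ are nonempty and drop empty ones.)

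\emph{Base step.} We have $f(0)=f(1)=1$. The set $E_1$ has diameter $<2^{-4}$, and $\operatorname{dist}(I_0,I_1)\geq 2^{-1}$ because $I_0=[(0)]$ and $I_1=[(1,0)]$. Hence $E_1$ meets at most one of $I_0,I_1$, and we choose $a_0\in\{0,1\}$ with $I_{a_0}\cap E_1=\varnothing$.

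\emph{Inductive step.} Suppose $a_n=j$ has been chosen with $I_j\cap E_i=\varnothing$ for all $i\le f(j)$. Every child $p\in T_j$ has $f(p)=N_j$, so we must find $p\in T_j$ with $I_p$ disjoint from $E_i$ for $f(j)<i\le N_j$. Children $p$ with $I_p\subseteq I_j$ are automatically disjoint from $E_i$ for $i\le f(j)$.

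\emph{Separation.} Each remaining index $i\geq f(j)+1$ satisfies
\[
\operatorname{diam}E_i<2^{-4i}\le 2^{-4(f(j)+1)}<2^{-D_j}
\]
by \eqref{eq:D-f-estimate}. By the sibling separation \eqref{eq:sibling-separation}, each such $E_i$ meets at most one cylinder $I_p$ with $p\in T_j$.

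\emph{Counting.} There are at most $N_j-f(j)$ indices $i$ with $f(j)<i\le N_j$, so they spoil at most $N_j-f(j)$ children. There are $N_j$ children in total, and $f(j)\ge1$ for every $j$. Hence at least one child $p$ survives, and we set $a_{n+1}=p$. This preserves the invariant, since $f(p)=N_j$.

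\emph{Conclusion.} Let $x$ be the point determined by the branch $(a_n)_{n\in\omega}$. For any $i$, we use that $(a_n)$ is strictly increasing together with \eqref{eq:f-estimate}, which gives $f(a_n)>a_n/2\to\infty$. So some $n$ has $f(a_n)\geq i$, and then $x\in I_{a_n}$ is not in $E_i$. Thus $x\in K$ lies outside $\bigcup_i E_i$, a contradiction.

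The main obstacle is getting the counting exactly right. The essential points are that the diameter bound $2^{-4i}$ for $i>f(j)$ beats the sibling separation $2^{-D_j}$ via \eqref{eq:D-f-estimate}, and that the number of newly handled indices, $N_j-f(j)$, is strictly less than the branching number $N_j$.
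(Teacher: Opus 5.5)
Your proof is correct and follows essentially the same argument as the paper. Both build a branch greedily while keeping the invariant that $I_{a_n}$ avoids $E_1,\dots,E_{f(a_n)}$: each new index $f(j)<i\le N_j$ meets at most one child cylinder by $D_j<4(f(j)+1)$ and the sibling separation, and since $N_j-f(j)<N_j$ some child survives.
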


\begin{proof}
Suppose, towards a contradiction, that $(E_i)_{i\geq1}$ covers
$K$ and
\[
\operatorname{diam}E_i<2^{-4i}
\qquad(i\geq1).
\]
We construct an infinite branch $(j_n)_{n\in\omega}$
such that
\begin{equation}
\label{eq:induction-avoidance}
I_{j_n}\cap
\bigcup_{1\leq i\leq f(j_n)}E_i
=\varnothing
\qquad(n\in\omega).
\end{equation}

For the initial step, the cylinders $I_0$ and $I_1$ have
distance $1$. Since
\[
\operatorname{diam}E_1<2^{-4}<1,
\]
the set $E_1$ cannot meet both of them. Choose
$j_0\in\{0,1\}$ such that
\[
I_{j_0}\cap E_1=\varnothing.
\]
As $f(j_0)=1$, this is exactly
\eqref{eq:induction-avoidance} for $n=0$.

Now suppose that $j_n$ has been chosen and satisfies
\eqref{eq:induction-avoidance}. For
\[
f(j_n)<i\leq N_{j_n},
\]
we have, by \eqref{eq:D-f-estimate},
\[
\operatorname{diam}E_i
<
2^{-4i}
\leq
2^{-4(f(j_n)+1)}
<
2^{-D_{j_n}}.
\]
Hence, by \eqref{eq:sibling-separation}, each such $E_i$ meets
at most one cylinder $I_p$, $p\in T_{j_n}$.
There are only
\[
N_{j_n}-f(j_n)<N_{j_n}
\]
such sets $E_i$, whereas $j_n$ has exactly $N_{j_n}$ immediate
successors. Therefore we may choose
\[
j_{n+1}\in T_{j_n}
\]
such that
\[
I_{j_{n+1}}
\cap
\bigcup_{f(j_n)<i\leq N_{j_n}}E_i
=
\varnothing.
\]
Since $j_{n+1}\in T_{j_n}$, we have
\[
I_{j_{n+1}}\subseteq I_{j_n}.
\]
Hence the induction hypothesis also gives
\[
I_{j_{n+1}}
\cap
\bigcup_{1\leq i\leq f(j_n)}E_i
=
\varnothing.
\]
Thus
\[
I_{j_{n+1}}
\cap
\bigcup_{1\leq i\leq N_{j_n}}E_i
=
\varnothing.
\]
Since
\[
f(j_{n+1})=N_{j_n},
\]
this is precisely \eqref{eq:induction-avoidance} for $n+1$.

The sequence $(j_n)$ satisfies \eqref{eq:branchDeterminesPoint}.
Let $x\in K$ be the point determined by this branch.
Since the sequence $(j_n)$ is strictly increasing and,
by \eqref{eq:f-estimate}, $f(j_n)>j_n/2$ whenever $j_n\geq2$,
we have
\[
f(j_n)\longrightarrow\infty.
\]
Hence, for every $i\ge 1$ there is $n$ such that $i\leq f(j_n)$. By
\eqref{eq:induction-avoidance},
\[
x\notin E_i.
\]
Thus $x\notin\bigcup_{i\geq1}E_i$, contradicting that the sets
$E_i$ cover $K$.
\end{proof}

\subsection{Admissible markings}

In this subsection, we specify how to mark certain nodes
along branches of the tree constructed in
Subsection~\ref{subsec:compact-obstruction}.
These markings will later provide covers showing that
every compact subset of the $G_\delta$ set constructed in
Subsection~\ref{sec:coded-set} is microscopic.

Fix an infinite branch $(a_n)_{n\in\omega}$
or a finite chain $(a_n)_{n\leq d}$, where $d\in\omega$.
In the finite case, require
\[
a_n\in S_n\quad(n\leq d),
\qquad
a_{n+1}\in T_{a_n}\quad(n<d).
\]
Let $D=\omega$ in the infinite case and
$D=\{0,\ldots,d\}$ in the finite case.
A nonempty finite or infinite strictly increasing sequence
\[
t_1<t_2<\cdots
\]
of elements of $D$ is called an \emph{admissible marking}
of the given branch or chain if, with $t_0=-1$, the following
conditions hold for $n\geq1$ whenever the relevant terms
are defined:
\begin{equation}
\label{eq:minimality}
t_n
=
\min\{t\in D:t>t_{n-1},\
a_t\equiv a_{t_n}\pmod{2^n}\},
\end{equation}
and
\begin{equation}
\label{eq:coherence}
a_{t_{n+1}}
\equiv a_{t_n}\pmod{2^n}.
\end{equation}
The marked nodes are $a_{t_1},a_{t_2},\ldots$.
We call \eqref{eq:minimality} the \emph{minimality condition}
and \eqref{eq:coherence} the \emph{coherence condition}.

\begin{lemma}
\label{lem:infinite-marking}
Every infinite branch admits an infinite admissible marking.
\end{lemma}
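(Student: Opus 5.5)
The plan is to build the marking from a nested sequence of residue classes, each hit infinitely often along the branch, and to read off $t_n$ as the first hit of the $n$-th class after $t_{n-1}$. Fix an infinite branch $(a_n)_{n\in\omega}$, so $D=\omega$.

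First I choose residues $r_1,r_2,\ldots$ with $r_k\in\{0,\ldots,2^k-1\}$ such that
\[
r_{k+1}\equiv r_k\pmod{2^k}
\]
and the set
\[
M_k=\{t\in\omega:a_t\equiv r_k\pmod{2^k}\}
\]
is infinite for every $k$. This is a pigeonhole (König-type) argument on the binary tree of residues.
\begin{itemize}
\item For $k=1$: the set $\omega$ splits into the two sets $\{t:a_t\equiv0\pmod2\}$ and $\{t:a_t\equiv1\pmod2\}$, so one of them is infinite, and I let $r_1$ be the corresponding residue.
\item Given $r_k$ with $M_k$ infinite: $M_k$ is the union of the two sets of $t$ with $a_t\equiv r_k$ or $a_t\equiv r_k+2^k\pmod{2^{k+1}}$. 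One of these is infinite, and $r_{k+1}$ is chosen accordingly.
\end{itemize}
Note that only the index sets $M_k$ need to be infinite, not the set of values. So it does not matter that $(a_n)$ is strictly increasing.

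Next, with $t_0=-1$, I define recursively
\[
t_n=\min\{t\in M_n:t>t_{n-1}\}.
\]
This minimum exists because $M_n$ is infinite. The resulting sequence is infinite, strictly increasing, and nonempty.

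It remains to check the two conditions.
\begin{itemize}
\item Minimality \eqref{eq:minimality}: since $t_n\in M_n$, we have $a_{t_n}\equiv r_n\pmod{2^n}$. Hence $\{t>t_{n-1}:a_t\equiv a_{t_n}\pmod{2^n}\}$ is exactly $\{t\in M_n:t>t_{n-1}\}$, and $t_n$ is its minimum by definition.
\item Coherence \eqref{eq:coherence}: we have $a_{t_{n+1}}\equiv r_{n+1}\pmod{2^{n+1}}$, hence modulo $2^n$. Combined with $r_{n+1}\equiv r_n\equiv a_{t_n}\pmod{2^n}$, this gives $a_{t_{n+1}}\equiv a_{t_n}\pmod{2^n}$.
\end{itemize}

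The only step needing care is the choice of the nested residues keeping each $M_k$ infinite, and that is settled by the two-way split above. No earlier result of the paper is needed beyond the definitions.
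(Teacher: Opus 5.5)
Your proof is correct and matches the paper's argument essentially verbatim: you choose nested residues $r_{n+1}\equiv r_n \pmod{2^n}$, each attained at infinitely many positions, and let $t_n$ be the first position after $t_{n-1}$ with $a_t\equiv r_n\pmod{2^n}$. Your explicit check of the minimality and coherence conditions just spells out what the paper states follows immediately.
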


\begin{proof}
Fix an infinite branch $(a_t)_{t\in\omega}$.
Choose $r_1\in\{0,1\}$ such that
$a_t\equiv r_1\pmod2$ for infinitely many $t\in\omega$.

Inductively, suppose that the residue $r_n\in\{0,\ldots,2^n-1\}$ modulo $2^n$ has been chosen and
occurs infinitely often.
Of the two residues $r_n$ and $r_n+2^n$ modulo $2^{n+1}$,
at least one occurs infinitely often. Choose such a residue
as $r_{n+1}$.
Thus, for every $n\in\mathbb N$,
\[
r_{n+1}\equiv r_n\pmod{2^n},
\]
and $r_n$ occurs infinitely often.

Starting with $t_0=-1$, inductively define for every $n\in\mathbb N$
\[
t_n
=
\min
\{t>t_{n-1}:a_t\equiv r_n\pmod{2^n}\}.
\]
Each $t_n$ exists because $a_t\equiv r_n\pmod{2^n}$
for infinitely many $t$, hence for some $t>t_{n-1}$.
The sequence $(t_n)_{n\geq1}$ is strictly increasing.
Conditions \eqref{eq:minimality} and \eqref{eq:coherence} follow immediately.
\end{proof}

Recall that every $j\in\omega$ labels a unique nonroot node
$(a_0,\ldots,a_d)$ of $\mathsf T$, with $a_d=j$.
Thus $j$ uniquely determines the chain $(a_n)_{n\leq d}$
leading to it.

\begin{lemma}
\label{lem:unique-finite-marking}
Let $j\in\omega$, and let $(a_n)_{n\leq d}$ be the unique
chain satisfying
\[
a_n\in S_n\quad(n\leq d),
\qquad
a_{n+1}\in T_{a_n}\quad(n<d),
\qquad
a_d=j.
\]
This chain admits exactly one admissible marking
whose last marked position is $d$.
\end{lemma}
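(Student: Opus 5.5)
We will prove existence and uniqueness together by a backward recursion that determines the marking from its last element. Suppose $t_1<\cdots<t_m=d$ is an admissible marking of the chain $(a_n)_{n\leq d}$ with last position $d$. We will first show that the marked labels are forced by $j$. Applying the coherence condition \eqref{eq:coherence} repeatedly gives, for every $n\le m$,
\[
a_{t_n}\equiv a_{t_{n+1}}\equiv\cdots\equiv a_{t_m}=j\pmod{2^n},
\]
because $a_{t_{k+1}}\equiv a_{t_k}\pmod{2^k}$ and $2^n\mid 2^k$ for $k\ge n$. Hence the residue $a_{t_n}\bmod 2^n$ equals $j\bmod 2^n$. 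Once these residues are known, the minimality condition \eqref{eq:minimality} determines the positions one at a time: $t_0=-1$, and
\[
t_n=\min\{t\in D: t>t_{n-1},\ a_t\equiv j\pmod{2^n}\}.
\]
So, for a fixed length $m$, the marking is uniquely determined by $j$ and the chain.

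Next we will show that $m$ is also determined. Define $t_n$ by the displayed recursion for as long as the minimum exists. Since $a_d=j$ satisfies every congruence, the set is nonempty whenever $t_{n-1}<d$. The recursion therefore produces a strictly increasing sequence that continues until the first index $m$ with $t_m=d$. This $m$ exists and satisfies $m\le d+1$, because the positions strictly increase inside $\{0,\dots,d\}$ and position $d$ is always a candidate. An admissible marking ending at $d$ must coincide with this sequence up to its last term. It cannot stop before $t_m$, since its last term is $d$ and the earlier terms are $<d$. It cannot go past $t_m=d$ either, since $D=\{0,\ldots,d\}$. This proves uniqueness.

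For existence, we will check that the sequence just built is admissible. Minimality holds by construction, since $a_{t_n}\equiv j\pmod{2^n}$ means the congruence class $a_{t_n}\bmod 2^n$ appearing in \eqref{eq:minimality} is the class of $j$. Coherence holds because $a_{t_{n+1}}\equiv j\pmod{2^{n+1}}$ implies $a_{t_{n+1}}\equiv j\equiv a_{t_n}\pmod{2^n}$. The main subtlety is reading \eqref{eq:minimality} correctly: it is self-referential, since $t_n$ appears on both sides through the residue of $a_{t_n}$. The coherence chain is exactly what removes this circularity, by fixing every residue to be that of $j$.
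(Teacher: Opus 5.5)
Your proposal is correct and follows essentially the same route as the paper. Like the paper, you build the canonical marking by the recursion with residues $j \bmod 2^n$, and for uniqueness you use coherence to force $a_{t_n}\equiv j \pmod{2^n}$, so that minimality reproduces the same recursion. Your explicit argument that the length of the marking is also forced (it can neither stop before nor run past the first index with $t_m=d$) spells out a step the paper leaves implicit.
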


\begin{proof}
Put $s_0=-1$. For $n\geq1$, as long as $s_{n-1}<d$,
define
\begin{equation}
\label{eq:canonical-marking}
s_n
=
\min\{t:s_{n-1}<t\leq d,\
a_t\equiv j\pmod{2^n}\}.
\end{equation}
The set on the right contains $d$, so the recursion is
well defined. Since the positions $s_n$ strictly increase
and are bounded by $d$, the recursion reaches $d$
after finitely many steps.

For every resulting position $s_n$,
\[
a_{s_n}\equiv j\pmod{2^n}.
\]
These congruences imply the coherence condition, while
\eqref{eq:canonical-marking} gives the minimality condition.
Thus the resulting sequence is an admissible marking
of $(a_n)_{n\leq d}$ whose last marked position is $d$.

For uniqueness, suppose that
\[
t_1<\cdots<t_q=d
\]
is another admissible marking of this chain.
By coherence, for every $1\le n\le q$, we have
\[
a_{t_n}\equiv a_{t_q}=j\pmod{2^n}.
\]
The minimality condition therefore gives
\[
t_n
=
\min\{t:t_{n-1}<t\leq d,\
a_t\equiv j\pmod{2^n}\},
\]
where $t_0=-1$. This is the same recursion as
\eqref{eq:canonical-marking}, so the two markings coincide.
\end{proof}

\subsection{The coded \texorpdfstring{$G_\delta$}{G-delta} set}
\label{sec:coded-set}

We now encode points of $K$ together with infinite admissible
markings of their branches as points of $2^\omega$.
This will provide the desired $G_\delta$ set $A\subseteq2^\omega$.

Let $x\in K$, and let $(t_n)_{n\geq1}$ be an infinite
admissible marking of the branch
$(a_n(x))_{n\in\omega}$ given by \eqref{eq:branch}. Define $b\in2^\omega$ by
\begin{equation*}
b(j)=1
\quad\Longleftrightarrow\quad
j=a_{t_n}(x)
\text{ for some }n\geq1
\quad (j\in\omega).
\end{equation*}
Thus $b$ records which nodes are marked.
Let
\[
\widetilde A\subseteq K\times2^\omega
\]
consist of all pairs $(x,b)$ where $b$ is obtained from some infinite admissible marking of the branch corresponding to $x$ in this way.
By Lemma~\ref{lem:infinite-marking}, the projection of
$\widetilde A$ onto the first coordinate is $K$.

Let
\[
h:2^\omega\times2^\omega\longrightarrow2^\omega
\]
be the interleaving homeomorphism defined by
\[
h(x,b)(2q)=x(q),
\qquad
h(x,b)(2q+1)=b(q)
\qquad(q\in\omega),
\]
and put
\[
A=h[\widetilde A].
\]
For $z=h(x,b)\in A$, we call $j$ a marked node of $z$
if $b(j)=1$.

We next associate a cylinder with each finite admissible
marking supplied by Lemma~\ref{lem:unique-finite-marking}.
Fix $j\in\omega$, and let $(a_n)_{n\leq d}$ be the unique
chain leading to $j$, so that $a_d=j$.
Let
\[
t_1<\cdots<t_q=d
\]
be its unique admissible marking whose last marked position
is $d$. Define $v_j\in2^{j+1}$ by
\[
v_j(i)=1
\quad\Longleftrightarrow\quad
i=a_{t_n}
\text{ for some }1\leq n\leq q
\qquad(0\leq i\leq j);
\]
this records all marked nodes of the finite chain, since
$a_{t_n}\leq j$ for every $1\leq n\leq q$.
In particular, $v_j(j)=1$.

Recall that $u_j$, defined in Subsection~\ref{subsec:compact-obstruction}, also has length $j+1$.
Let $w_j\in2^{2j+2}$ be obtained by interleaving $u_j$
and $v_j$, and put
\begin{equation*}
J_j=[w_j].
\end{equation*}
Thus, for $x,b\in2^\omega$,
\[
h(x,b)\in J_j
\quad\Longleftrightarrow\quad
x\upharpoonright(j+1)=u_j
\text{ and }
b\upharpoonright(j+1)=v_j.
\]
Moreover,
\begin{equation}
\label{eq:Jj-diameter}
\operatorname{diam}J_j=2^{-2j-2}.
\end{equation}

\begin{lemma}
\label{lem:marked-implies-J}
Let $z\in A$, and let $j$ be a marked node of $z$. Then
\begin{equation}
\label{eq:marked-implies-J}
z\in J_j.
\end{equation}
\end{lemma}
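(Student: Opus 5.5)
Write $z=h(x,b)$ with $(x,b)\in\widetilde A$. Let $(a_n)_{n\in\omega}=(a_n(x))_{n\in\omega}$ be the branch of $x$, and let $(t_n)_{n\geq1}$ be an infinite admissible marking producing $b$. Suppose $j=a_{t_q}$ for some $q\geq1$, and put $d=t_q$. Then $j\in S_d$, so $(a_n)_{n\leq d}$ is exactly the unique chain leading to $j$. By the characterization of $J_j$ it suffices to prove two things: $x\upharpoonright(j+1)=u_j$ and $b\upharpoonright(j+1)=v_j$.

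The first claim is immediate. Since $x\in I_{a_d}=I_j=[u_j]$ by \eqref{eq:branch}, we get $x\upharpoonright L_j=u_j$, and $L_j=j+1$.

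For the second claim, we first check that the truncation $t_1<\cdots<t_q=d$ is an admissible marking of the finite chain $(a_n)_{n\leq d}$. Coherence is inherited directly from the infinite marking. For minimality, \eqref{eq:minimality} for the branch takes a minimum over $t\in\omega$ with $t>t_{n-1}$. For $n\leq q$ this minimum equals $t_n\leq d$, so restricting the range to $t\leq d$ gives the same minimum. Hence this truncation is an admissible marking of the chain whose last marked position is $d$. By the uniqueness in Lemma~\ref{lem:unique-finite-marking}, it is precisely the marking used to define $v_j$. Consequently, for $i\leq j$,
\[
v_j(i)=1\iff i\in\{a_{t_1},\ldots,a_{t_q}\}.
\]

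It remains to compare this with $b(i)$ for $i\leq j$. By definition, $b(i)=1$ iff $i=a_{t_n}$ for some $n\geq1$. Marked nodes with $n\leq q$ are exactly those recorded in $v_j$. Marked nodes with $n>q$ satisfy $t_n>t_q=d$. Since the branch is strictly increasing, $a_{t_n}>a_d=j$, so these do not affect $b\upharpoonright(j+1)$. Therefore $b\upharpoonright(j+1)=v_j$, and hence $z\in J_j$.

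The only delicate point is the minimality check for the truncated marking, namely that restricting the range of $t$ to $\{0,\ldots,d\}$ does not change the minima. This holds because each minimum $t_n$ is already at most $d$, so the check is routine.
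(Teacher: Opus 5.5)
Your proof is correct and follows the same route as the paper. You get $x\upharpoonright(j+1)=u_j$ from $x\in I_j$, identify the truncated marking $t_1<\cdots<t_q$ with the one encoded by $v_j$ via the uniqueness in Lemma~\ref{lem:unique-finite-marking}, and note that later marked nodes have labels exceeding $j$; the only difference is that you explicitly verify that minimality survives truncation to $\{0,\ldots,d\}$, which the paper takes for granted.
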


\begin{proof}
Write $z=h(x,b)$, and let $(t_m)_{m\geq1}$ be the
infinite admissible marking of the branch
$(a_r(x))_{r\in\omega}$ encoded by $b$.
Then $j=a_{t_n}(x)$ for some $n\geq1$.

The finite chain $(a_r(x))_{r\leq t_n}$ leading to $j$
has the admissible marking $t_1<\cdots<t_n$, whose
last marked position is the final position of the chain.
By Lemma~\ref{lem:unique-finite-marking}, this is precisely
the marking encoded by $v_j$.
All later marked nodes of $z$ have labels greater than $j$, so
\[
b\upharpoonright(j+1)=v_j.
\]
Moreover, since $j=a_{t_n}(x)$, \eqref{eq:branch} gives
$x\in I_j=[u_j]$, and hence
\[
x\upharpoonright(j+1)=u_j.
\]
Therefore $z=h(x,b)\in J_j$.
\end{proof}

\begin{proposition}
\label{prop:A-Gdelta}
We have
\begin{equation}
\label{eq:A-limsup}
A
=
\bigcap_{N\in\omega}
\bigcup_{j\geq N}J_j.
\end{equation}
In particular, $A$ is a $G_\delta$ subset of $2^\omega$.
\end{proposition}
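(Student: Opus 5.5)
The plan is to prove the two inclusions in \eqref{eq:A-limsup} separately. The $G_\delta$ claim then follows at once: each $J_j$ is a cylinder, hence clopen, so each $\bigcup_{j\geq N}J_j$ is open.

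\emph{Inclusion $\subseteq$.} Let $z=h(x,b)\in A$, with $b$ encoding an infinite admissible marking $(t_n)_{n\geq1}$ of the branch of $x$. The marked nodes $a_{t_n}(x)$ form a strictly increasing sequence, since the branch labels strictly increase. So $z$ has infinitely many marked nodes $j$, and Lemma~\ref{lem:marked-implies-J} gives $z\in J_j$ for each of them. Hence $z\in\bigcup_{j\geq N}J_j$ for every $N$.

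\emph{Inclusion $\supseteq$.} Let $z=h(x,b)$ lie in $J_j$ for all $j$ in an infinite set $Q\subseteq\omega$. For $j\in Q$ this means $x\in I_j$ and $b\upharpoonright(j+1)=v_j$. I would proceed in three steps.

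First, show $x\in K$. Each $S_n$ is finite, so the depths $d_j$ (with $j\in S_{d_j}$) are unbounded over $j\in Q$. By nesting, $I_j\subseteq I_{a_n}$ for each ancestor $a_n\in S_n$ of $j$, with $n\leq d_j$. Hence $x\in\bigcup_{i\in S_n}I_i$ for every $n$, that is, $x\in K$. Since the cylinders at each level are disjoint, $x\in I_j$ with $j\in S_{d_j}$ forces $a_{d_j}(x)=j$. So every $j\in Q$ lies on the branch $(a_n(x))$.

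Second, show that the finite markings fit together. For $j<j'$ in $Q$, $v_j$ is an initial segment of $v_{j'}$, since both are initial segments of $b$. Let $M_j$ be the set of positions of the unique admissible marking of the chain up to $j$ from Lemma~\ref{lem:unique-finite-marking}. Labels along the branch are strictly increasing, so positions and labels correspond. It follows that $M_j=M_{j'}\cap\{0,\ldots,d_j\}$. Therefore the set of labels $i$ with $b(i)=1$ is exactly the set of labels $a_t(x)$ with $t\in\bigcup_{j\in Q}M_j$. Enumerate these positions increasingly as $t_1<t_2<\cdots$. This sequence is infinite, because each $d_j$ with $j\in Q$ belongs to it and these are unbounded. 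Every finite initial segment ending at some $d_j$ is the marking from Lemma~\ref{lem:unique-finite-marking}.

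Third, check that $(t_n)$ is an admissible marking of the infinite branch. This is the main point, though I expect it to be routine. Conditions \eqref{eq:minimality} and \eqref{eq:coherence} for a given $n$ involve only $t_{n-1},t_n,t_{n+1}$ and positions $t\leq t_n$. Choose $j\in Q$ with $d_j\geq t_{n+1}$. The initial segment of the marking up to $d_j$ is admissible for the finite chain of length $d_j$. In the minimality condition, the minimum over $t>t_{n-1}$ with $t\leq d_j$ agrees with the minimum over all $t>t_{n-1}$, because the minimizer $t_n$ is $\leq d_j$. Hence both conditions hold for the infinite branch. So $(x,b)\in\widetilde A$, and therefore $z\in A$.
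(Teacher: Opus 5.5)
Your proposal is correct and follows essentially the same route as the paper. Both inclusions are argued the same way: the reverse inclusion locates $x\in K$ with all indices in $Q$ on its branch, glues the compatible words $v_j$ into the marking encoded by $b$, and checks admissibility locally by choosing $j\in Q$ far enough out. Your explicit justification that the finite and infinite minima in the minimality condition coincide supplies a detail the paper leaves implicit.
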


\begin{proof}
Every $z\in A$ has infinitely many marked nodes.
By \eqref{eq:marked-implies-J}, it therefore belongs to
infinitely many cylinders $J_j$. This proves
\[
A\subseteq\bigcap_{N\in\omega}
\bigcup_{j\geq N}J_j.
\]

Conversely, let $z\in\bigcap_{N\in\omega}
\bigcup_{j\geq N}J_j$, write
\[
h^{-1}(z)=(x,b),
\]
and put
\[
M=\{j\in\omega:z\in J_j\}.
\]
Then $M$ is infinite, and, for every $j\in M$, we have $x\in I_j$.
Cylinders $I_j$, $j\in\omega$, corresponding to incomparable nodes of $\mathsf T$
are disjoint, so the nodes labelled by
elements of $M$ form a chain,
and so these nodes have unbounded depths and determine an infinite
branch $(a_n)_{n\in\omega}$.
Then $x\in I_{a_n}$ for every $n\in\omega$.
Thus $x\in K$, and $(a_n)_{n\in\omega}$ is its
corresponding branch.

For every $j\in M$, membership of $z$ in $J_j$ gives
\[
b\upharpoonright(j+1)=v_j.
\]
Hence, for $i,j\in M$ with $i<j$, we have
\begin{equation}
\label{eq:v-compatibility}
v_i=v_j\upharpoonright(i+1).
\end{equation}
Put
\[
c=\bigcup_{j\in M}v_j.
\]
By \eqref{eq:v-compatibility} and the unboundedness of $M$,
this defines an element of $2^\omega$ extending each $v_j$,
$j\in M$.
Every coordinate where $c$ equals $1$ labels a node
on the branch of $x$. Moreover, $c(j)=v_j(j)=1$ for every $j\in M$.
In particular, $c$ equals $1$ at infinitely many coordinates.
Let
\[
t_1<t_2<\cdots
\]
enumerate the positions $t$ along the branch $(a_n)_{n\in\omega}$ for which
$c(a_t)=1$.
The sequence $(t_n)_{n\geq1}$ is an admissible marking
of the branch of $x$.
Indeed, fix $n\geq1$ and choose $j\in M$ with
$j\geq a_{t_{n+1}}(x)$.
Since $c\upharpoonright(j+1)=v_j$, the first $n+1$
marked positions $t_1,\ldots,t_{n+1}$ coincide with
those of the finite admissible marking encoded by $v_j$.
Thus the minimality condition at $t_n$ and the coherence
condition between $t_n$ and $t_{n+1}$ hold.

For every $j\in M$,
\[
c\upharpoonright(j+1)=v_j
=b\upharpoonright(j+1).
\]
Since $M$ is unbounded, $c=b$.
Thus $(x,b)\in\widetilde A$, so $z\in A$.

This proves \eqref{eq:A-limsup}. Since every $J_j$ is open,
$A$ is $G_\delta$.
\end{proof}

The even coordinates of each point of $A$ recover
the corresponding point of $K$.
This allows us to transfer the covering obstruction of $K$ to $A$.

\begin{proposition}
\label{prop:A-nonmicroscopic}
We have
\[
A\notin\mathcal T_{2^{-8}}.
\]
In particular, $A$ is nonmicroscopic.
\end{proposition}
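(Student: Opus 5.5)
We argue by contradiction and pull a cover of $A$ back to a cover of $K$ through the projection onto the even coordinates. Suppose $A\in\mathcal T_{2^{-8}}$. Taking $k=1$ in the definition, we obtain sets $F_i\subseteq2^\omega$, $i\geq1$, with
\[
A\subseteq\bigcup_{i\geq1}F_i,
\qquad
\operatorname{diam}F_i<2^{-8i}.
\]
Let $\pi:2^\omega\to2^\omega$ be the map $\pi(z)(q)=z(2q)$, so that $\pi(h(x,b))=x$.

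The key estimate is that $\pi$ roughly halves the depth of agreement. If $z\neq z'$ first differ at coordinate $m$, then $\pi(z)$ and $\pi(z')$ agree on all coordinates $q$ with $2q<m$, that is, on the first $\lceil m/2\rceil$ coordinates. Hence
\[
d(\pi(z),\pi(z'))\leq2^{-\lceil m/2\rceil}\leq d(z,z')^{1/2}.
\]
Therefore $\operatorname{diam}\pi[F_i]\leq(\operatorname{diam}F_i)^{1/2}<2^{-4i}$. This is strict because $\operatorname{diam}F_i<2^{-8i}$; the case $F_i=\varnothing$ is trivial.

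Next we check that the sets $E_i=\pi[F_i]$ cover $K$. For this, every $x\in K$ must be $\pi(z)$ for some $z\in A$. This holds because the projection of $\widetilde A$ onto the first coordinate is all of $K$, which is Lemma~\ref{lem:infinite-marking}: pick an infinite admissible marking of the branch of $x$, encode it as $b$, and put $z=h(x,b)\in A$. Then $z\in F_i$ for some $i$, so $x\in E_i$.

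Thus $(E_i)_{i\geq1}$ covers $K$ with $\operatorname{diam}E_i<2^{-4i}$, which contradicts Proposition~\ref{prop:K-obstruction}. So $A\notin\mathcal T_{2^{-8}}$. Nonmicroscopicity then follows from Lemma~\ref{lem:microscopic-T}, since a microscopic set lies in every $\mathcal T_\eta$.

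The only delicate point is the exponent bookkeeping in the square-root estimate: we need $\lceil m/2\rceil\geq m/2$ together with strict inequalities. This is routine.
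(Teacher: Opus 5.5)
Your proposal is correct and is essentially the paper's proof. You project onto the even coordinates and use the estimate $d(\pi(z),\pi(z'))\leq d(z,z')^{1/2}$ together with $\pi[A]\supseteq K$ (from Lemma~\ref{lem:infinite-marking}) to turn a $\mathcal T_{2^{-8}}$ cover of $A$ with $k=1$ into a cover of $K$ violating Proposition~\ref{prop:K-obstruction}, then conclude nonmicroscopicity via Lemma~\ref{lem:microscopic-T}; your derivation of the Hölder bound via agreement on the first $\lceil m/2\rceil$ coordinates is just a rephrasing of the paper's argument.
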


\begin{proof}
Let
\[
p:2^\omega\longrightarrow2^\omega
\]
be the projection onto the even coordinates, defined by
\[
p(z)(q)=z(2q)
\qquad(q\in\omega).
\]
By the definition of $A$ and
Lemma~\ref{lem:infinite-marking},
\begin{equation}
\label{eq:projection-A-K}
p[A]=K.
\end{equation}
Moreover, for all $z,z'\in2^\omega$,
\begin{equation}
\label{eq:projection-holder}
d(p(z),p(z'))
\leq
d(z,z')^{1/2}.
\end{equation}
Indeed, if $p(z)\neq p(z')$ and their first differing
coordinate is $q$, then $z$ and $z'$ differ at coordinate
$2q$ or earlier. Thus
\[
d(p(z),p(z'))=2^{-q}
\leq d(z,z')^{1/2}.
\]

Suppose that $A\in\mathcal T_{2^{-8}}$.
Taking $k=1$ in the definition, choose sets
$E_i\subseteq2^\omega$, $i\geq1$, such that
\[
A\subseteq\bigcup_{i\geq1}E_i,
\qquad
\operatorname{diam}E_i<2^{-8i}
\qquad(i\geq1).
\]
By \eqref{eq:projection-A-K}, the sets $p[E_i]$
cover $K$. By \eqref{eq:projection-holder},
\[
\operatorname{diam}p[E_i]
\leq
(\operatorname{diam}E_i)^{1/2}
<
2^{-4i}
\qquad(i\geq1).
\]
This contradicts Proposition~\ref{prop:K-obstruction}.

By Lemma~\ref{lem:microscopic-T}, $A$ is not microscopic.
\end{proof}

\subsection{Compact subsets of \texorpdfstring{$A$}{A} are microscopic}
\label{subsec:compact-subsets}

We now use the marked nodes to construct small covers
of compact subsets of $A$.

For $z=h(x,b)\in A$, let $(a_i(x))_{i\in\omega}$ be the
branch corresponding to $x$, and let $(t_m)_{m\geq1}$ be
its infinite admissible marking encoded by $b$.
For every $m\geq1$, put
\[
j_m(z)=a_{t_m}(x).
\]
Thus $j_m(z)$ is the label of the $m$th marked node of $z$.
Since node labels increase along each branch, the sequence
$(j_m(z))_{m\geq1}$ is strictly increasing and enumerates
the coordinates where $b$ equals $1$.

\begin{lemma}
\label{lem:jm-continuous}
For every $m\geq1$, the map
\[
j_m:A\longrightarrow\omega,
\]
where $\omega$ carries the discrete topology, is continuous.
\end{lemma}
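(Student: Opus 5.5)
Continuity into a discrete space means that $j_m$ is locally constant. So the plan is to fix $z\in A$ and $m\geq1$, put $j=j_m(z)$, and find a neighbourhood $U$ of $z$ with $j_m(z')=j$ for every $z'\in U\cap A$. The natural candidate is the cylinder determined by the first $2j+2$ coordinates of $z$, which is exactly $J_j$. Indeed, $j$ is a marked node of $z$, so Lemma~\ref{lem:marked-implies-J} gives $z\in J_j$. Thus we take $U=J_j$ and must show that $j_m(z')=j$ for every $z'\in A\cap J_j$.

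Let $z'=h(x',b')\in A\cap J_j$. Then $b'\upharpoonright(j+1)=v_j$ and $x'\upharpoonright(j+1)=u_j$. The second equality gives $x'\in I_j$, so $j$ lies on the branch of $x'$. More precisely, $j=a_d(x')$, where $d$ is the depth index of the node $j$. The branch of $x'$ up to position $d$ is therefore the unique chain leading to $j$.

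We now compare markings.
\begin{itemize}
\item The coordinates $i\leq j$ at which $b'$ equals $1$ are exactly those at which $v_j$ equals $1$.
\item The coordinates at which $v_j$ equals $1$ are the labels $a_{s_1},\ldots,a_{s_q}$ of the unique finite admissible marking from Lemma~\ref{lem:unique-finite-marking}, with $s_q=d$.
\item Node labels strictly increase along the branch, and the coordinates where $b'=1$ are labels of nodes on that branch. Hence the marked nodes of $z'$ with labels $\leq j$ are exactly its first $q$ marked nodes.
\end{itemize}
It follows that $j_q(z')=j$ and that $j_n(z')=a_{s_n}$ for $n\leq q$. The same description applies to $z$ itself, since $z\in J_j$. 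In particular $j_m(z)=j$ forces $q=m$, and hence $j_m(z')=j_q(z')=j$.

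The main point to handle carefully is the identification of the first $q$ marked nodes of $z'$ with the positions of $v_j$ equal to $1$. This needs two facts: every marked node of $z'$ lies on the branch of $x'$, and labels strictly increase along that branch. Then the number of marked nodes of $z'$ with label $\leq j$ is the number of $1$'s in $b'\upharpoonright(j+1)=v_j$. This count is the same for $z$ and $z'$, and it equals $m$ because $j$ is the $m$th marked node of $z$. Everything else follows from the definitions, so I expect no real obstacle beyond this bookkeeping.
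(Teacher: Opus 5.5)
Your proof is correct and is essentially the paper's argument. You take the relatively open neighbourhood $A\cap J_j$ (which contains $z$ by Lemma~\ref{lem:marked-implies-J}) and observe that every $z'=h(x',b')$ in it satisfies $b'\upharpoonright(j+1)=v_j$. Since the marked nodes of $z'$ are the coordinates where $b'$ equals $1$, listed in increasing order, and $v_j$ has exactly $m$ ones with the last at $j$, you conclude $j_m(z')=j$. Your detour through $x'\in I_j$ and the chain leading to $j$ is harmless but unnecessary, since only the count of $1$'s in $v_j$ matters.
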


\begin{proof}
Fix $z=h(x,b)\in A$ and put $j=j_m(z)$.
By Lemma~\ref{lem:marked-implies-J}, we have $z\in J_j$.
Consequently,
\[
b\upharpoonright(j+1)=v_j.
\]
Since $j$ is the $m$th marked node of $z$, the word $v_j$
has exactly $m$ entries equal to $1$, the last at coordinate $j$.

If $z'=h(x',b')\in A\cap J_j$, then
\[
b'\upharpoonright(j+1)=v_j.
\]
Thus the $m$th coordinate where $b'$ equals $1$ is also $j$,
so $j_m(z')=j$.
Hence $j_m$ is constant on the relatively open neighborhood
$A\cap J_j$ of $z$, proving continuity.
\end{proof}

For $m\geq1$ and $0\leq r<2^m$, put
\begin{equation*}
A_{m,r}
=
\{z\in A:j_m(z)\equiv r\pmod{2^m}\}.
\end{equation*}
By Lemma~\ref{lem:jm-continuous}, these sets are clopen
in $A$. For each fixed $m$, the sets $A_{m,r}$, $0\leq r<2^m$, are pairwise disjoint
and their union is $A$.

The coherence condition ensures that, for an element
of $A_{m,r}$, every marked node from the $m$th one onward
has label congruent to $r$ modulo $2^m$.
We use the cylinders associated with these labels to
obtain covers of $A_{m,r}$ at the rate
\[
\eta_m=2^{-2^{m+1}}
\qquad(m\geq1).
\]

\begin{lemma}
\label{lem:fibres-small}
For every $m\geq1$ and every $0\leq r<2^m$,
\[
A_{m,r}\in\mathcal T_{\eta_m}.
\]
\end{lemma}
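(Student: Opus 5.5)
The plan is to cover $A_{m,r}$ directly by the cylinders $J_j$ whose labels $j$ are congruent to $r$ modulo $2^m$. We reindex these cylinders by the quotient $s$ in $j=r+2^m s$, and then check that the diameter bound \eqref{eq:Jj-diameter} matches the rate $\eta_m$.

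First I show that every $z\in A_{m,r}$ has infinitely many marked nodes congruent to $r$ modulo $2^m$. Write $j_n=j_n(z)$. By coherence \eqref{eq:coherence}, $j_{n+1}\equiv j_n\pmod{2^n}$ for every $n\geq1$. For $n\geq m$ this gives $j_{n+1}\equiv j_n\pmod{2^m}$. By induction from $j_m\equiv r\pmod{2^m}$, we get $j_n\equiv r\pmod{2^m}$ for all $n\geq m$. Since $(j_n)$ is strictly increasing, these labels are unbounded. By Lemma~\ref{lem:marked-implies-J}, $z\in J_{j_n}$ for each $n$.

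Next, fix $k\in\mathbb N$ and put
\[
E_s=J_{r+2^m s}\qquad(s\geq k).
\]
By \eqref{eq:Jj-diameter},
\[
\operatorname{diam}E_s=2^{-2(r+2^m s)-2}\leq 2^{-2^{m+1}s-2}<2^{-2^{m+1}s}=\eta_m^{\,s}.
\]
So the diameter condition in the definition of $\mathcal T_{\eta_m}$ holds, with the index $i=s$ running over $i\geq k$.

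For the covering condition, take $z\in A_{m,r}$. By the first step, choose $n\geq m$ with $j_n(z)\geq r+2^m k$. Since $j_n(z)\equiv r\pmod{2^m}$ and $0\leq r<2^m$, we can write $j_n(z)=r+2^m s$ with an integer $s\geq k$. Then $z\in J_{j_n(z)}=E_s$. Hence $A_{m,r}\subseteq\bigcup_{s\geq k}E_s$, and since $k$ was arbitrary, $A_{m,r}\in\mathcal T_{\eta_m}$.

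The only real point is the first step: coherence propagates the residue of the $m$th marked node to all later ones. It works because the modulus $2^n$ in \eqref{eq:coherence} only increases with $n$. Everything after that is bookkeeping of indices, and I do not expect an obstacle there.
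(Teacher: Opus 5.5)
Your proof is correct and follows essentially the same route as the paper. You use coherence to propagate the residue of $j_m(z)$ modulo $2^m$ to all later marked nodes, then cover $A_{m,r}$ by the cylinders $J_{r+2^m i}$, $i\geq k$, via Lemma~\ref{lem:marked-implies-J} and the diameter bound \eqref{eq:Jj-diameter}; your explicit choice of $n$ with $j_n(z)\geq r+2^m k$ just spells out the step the paper attributes to the unboundedness of $(j_n(z))$.
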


\begin{proof}
Fix $m\geq1$, $0\leq r<2^m$, and $k\in\mathbb N$.
Let $z\in A_{m,r}$.
By repeated application of the coherence condition
\eqref{eq:coherence},
\[
j_n(z)\equiv j_m(z)\equiv r\pmod{2^m}
\qquad(n\geq m).
\]
Since the sequence $(j_n(z))_{n\geq1}$ is unbounded,
we may choose $n\geq m$ such that
\[
j_n(z)=r+2^m i
\]
for some $i\geq k$.
By Lemma~\ref{lem:marked-implies-J},
\[
z\in J_{j_n(z)}=J_{r+2^m i}.
\]
Therefore
\begin{equation*}
A_{m,r}
\subseteq
\bigcup_{i\geq k}J_{r+2^m i}
\qquad(k\in\mathbb N).
\end{equation*}
Moreover, by \eqref{eq:Jj-diameter},
\[
\operatorname{diam}J_{r+2^m i}
=
2^{-2(r+2^m i)-2}
<
2^{-2^{m+1}i}
=
\eta_m^i.
\]
Since $k$ was arbitrary, these covers show that
$A_{m,r}\in\mathcal T_{\eta_m}$.
\end{proof}

For each fixed $m\geq1$, the identity
\[
A=\bigcup_{r=0}^{2^m-1}A_{m,r}
\]
expresses $A$ as a finite union of sets that are closed in $A$
and belong to $\mathcal T_{\eta_m}$.
Intersecting these sets with a compact subset of $A$ gives
compact sets to which Lemma~\ref{lem:finite-union} applies
without changing the covering rate.

The relative closedness of these sets is essential.
To illustrate this, let $p:2^\omega\to2^\omega$ be the
projection onto the even coordinates, defined by
\[
p(z)(q)=z(2q)\qquad(q\in\omega).
\]
Since $d(p(z),p(z'))\leq d(z,z')^{1/2}$, each
$p(A_{m,r})$ belongs to $\mathcal T_{\sqrt{\eta_m}}$.
Thus
\[
K=p[A]=\bigcup_{r=0}^{2^m-1}p[A_{m,r}]
\]
is also a finite union of sets with a corresponding
covering property.
However, these sets cannot all be closed in $K$ for every
$m\geq1$. Otherwise, compactness of $K$ and
Lemma~\ref{lem:finite-union} would imply that
$K\in\mathcal T_{\sqrt{\eta_m}}$ for every $m\geq1$.
As $\sqrt{\eta_m}\to0$, Lemma~\ref{lem:microscopic-T}
would then imply that $K$ is microscopic, contrary to
Proposition~\ref{prop:K-obstruction}.

\begin{proof}[Proof of Theorem~\ref{thm:main}(\ref{item:main-cantor})]
By Propositions~\ref{prop:A-Gdelta}
and~\ref{prop:A-nonmicroscopic}, the set $A$ is
a nonmicroscopic $G_\delta$ subset of $2^\omega$.

Let $C\subseteq A$ be compact, and fix $m\geq1$.
For every $0\leq r<2^m$, put
\[
C_{m,r}=C\cap A_{m,r}.
\]
Each $C_{m,r}$ is closed in $C$ and hence compact.
By Lemma~\ref{lem:fibres-small}, it also belongs to
$\mathcal T_{\eta_m}$, since this covering property
is inherited by subsets.
As
\[
C=\bigcup_{0\leq r<2^m}C_{m,r},
\]
Lemma~\ref{lem:finite-union} gives
\[
C\in\mathcal T_{\eta_m}.
\]

Since $m\geq1$ was arbitrary and $\eta_m\to0$ as $m\to\infty$,
Lemma~\ref{lem:microscopic-T} shows that
$C\in\mathcal T_\eta$ for every $0<\eta<1$.
The same lemma then shows that $C$ is microscopic.
\end{proof}

\subsection{Transfer to the real line}
\label{subsec:real-line}

We transfer the example to the real line using
\cite[Lemma~2.8]{ZindulkaNowakowski}.

\begin{proof}[Proof of Theorem~\ref{thm:main}(\ref{item:main-real})]
Let $T:2^\omega\to[0,1]$ be the continuous surjection
defined by
\[
T(x)=\sum_{n=0}^{\infty}\frac{x(n)}{2^{n+1}}.
\]
By \cite[Lemma~2.8]{ZindulkaNowakowski}, applied to
the multiplicatively complete family of geometric sequences and $m=1$,
a set $E\subseteq2^\omega$ is microscopic if and only if
$T[E]$ is microscopic.

Let $A\subseteq2^\omega$ be the set constructed above
in the proof of Theorem~\ref{thm:main}(\ref{item:main-cantor}),
and put $B=T[A]$.
Every $z=h(x,b)\in A$ has infinitely many zeros and ones.
Indeed, $b$ equals $1$ at infinitely many marked node
labels and equals $0$ at the infinitely many labels
outside the branch of $x$.
Thus every point of $B$ has a unique binary expansion,
and
\[
T^{-1}[B]=A.
\]
Since $A$ is $G_\delta$ in the compact space $2^\omega$,
its complement is a countable union of compact sets.
By continuity of $T$, the image of the complement of $A$ under $T$ is therefore $F_\sigma$, and
\[
B=[0,1]\setminus T[2^\omega\setminus A].
\]
Hence $B$ is $G_\delta$ in $\mathbb R$.
By the cited lemma, $B$ is nonmicroscopic.

Finally, let $C\subseteq B$ be compact.
Since $T$ is continuous, $T^{-1}[C]$ is closed in
$2^\omega$ and hence compact.
Moreover, $T^{-1}[C]\subseteq A$, so it is microscopic.
The cited lemma now shows that $C=T[T^{-1}[C]]$
is microscopic.
\end{proof}

\begin{remark}
Alternatively, Theorem~\ref{thm:main}(\ref{item:main-real})
can be proved using the standard ternary embedding
\[
x\mapsto\sum_{n=0}^{\infty}\frac{2x(n)}{3^{n+1}}
\]
instead of the binary-expansion map.
This is a homeomorphism onto the ternary Cantor set,
and both it and its inverse are Hölder continuous.
Thus the transfer follows from the elementary fact that
Hölder maps preserve microscopicity, without the need
to consider uniqueness of binary expansions.
This also yields an example contained in the ternary Cantor set.
\end{remark}

\subsection*{Declaration of generative AI and AI-assisted
technologies in the manuscript preparation process}

The authors used ChatGPT (OpenAI) to assist with calculations
and to improve the clarity and wording of the manuscript.
The authors independently verified all mathematical arguments,
reviewed and revised the assisted material, and take full
responsibility for the content of the article.

\subsection*{Acknowledgements}  We thank Ond\v{r}ej Zindulka for encouraging us to work on this question.

\end{document}